\def\cal#1{\mathcal{#1}}
\def\NZQ{\Bbb}               % the font for N,Z,Q,R,C
\def\NN{{\NZQ N}}
\def\PP{{\NZQ P}}
\def\AA{{\NZQ A}}
\def\PP{{\NZQ P}}
\def\frk{\frak}               % font for "Fraktur"
\def\mm{{\frk m}}
\def\opn#1#2{\def#1{\operatorname{#2}}} % to make
\opn\chara{char}
\opn\length{\ell}
\opn\pd{pd}
\opn\rk{rk}
\opn\projdim{proj\,dim}
\opn\rank{rank}
\opn\depth{depth}
\opn\grade{grade}
\opn\height{ht}
\opn\embdim{emb\,dim}
\opn\codim{codim}
\def\OO{\mathcal{O}}
\opn\Tr{Tr}
\opn\bigrank{big\,rank}
\opn\superheight{superheight}\opn\lcm{lcm}
\opn\trdeg{tr\,deg}%
\opn\reg{reg}
\opn\lreg{lreg}
\opn\div{div}
\opn\Div{Div}
\opn\WDiv{WDiv}
\opn\cl{cl}
\opn\Cl{Cl}
\opn\Spec{Spec}
\opn\Supp{Supp}
\opn\supp{supp}
\opn\Sing{Sing}
\opn\Ass{Ass}
\opn\Assh{Assh}
\opn\Min{Min}
\opn\Reg{Reg}
\opn\Ann{Ann}
\opn\Rad{Rad}
\opn\Soc{Soc}
\opn\Socle{Socle}
\opn\Ker{Ker}
\opn\Coker{Coker}
\opn\Im{Im}
\opn\Hom{Hom}
\opn\Mor{Mor}
\opn\Tor{Tor}
\opn\Ext{Ext}
\opn\End{End}
\opn\Aut{Aut}
\opn\id{id}
\opn\nat{nat}
\opn\pff{pf}%   \pf exists already
\opn\Pf{Pf}
\opn\GL{GL}
\opn\SL{SL}
\opn\mod{mod}
\opn\ord{ord}
\opn\Proj{Proj}
\opn\aff{aff}
\opn\con{conv}
\opn\relint{relint}
\opn\st{st}
\opn\lk{lk}
\opn\cn{cn}
\opn\core{core}
\opn\vol{vol}
\opn\link{link}
\opn\star{star}
\opn\gr{gr}
\def\pot#1#2{#1[\kern-0.28ex[#2]\kern-0.28ex]}
\opn\dirlim{\underrightarrow{\lim}}
\opn\inivlim{\underleftarrow{\lim}}
\let\iso=\cong
\let\to=\rightarrow
\let\To=\longrightarrow
\def\Implies{\ifmmode\Longrightarrow \else
     \unskip${}\Longrightarrow{}$\ignorespaces\fi}
\def\implies{\ifmmode\Rightarrow \else
     \unskip${}\Rightarrow{}$\ignorespaces\fi}
\def\iff{\ifmmode\Longleftrightarrow \else
     \unskip${}\Longleftrightarrow{}$\ignorespaces\fi}
\opn\H{H}
\opn\Pic{Pic}
\newtheorem{Theorem}{Theorem}
\newtheorem{Corollary}[Theorem]{Corollary}
\newtheorem{Proposition}[Theorem]{Proposition}
\let\epsilon\varepsilon
\def\FF{{\NZQ F}}
\def\OO{{\cal O}} 
\opn\inii{in}
\opn\inim{inm}
\opn\set{set}
\def\pnt{{\raise0.5mm\hbox{\large\bf.}}}
\begin{document}

\title{Kodaira type vanishing theorem\\ for the Hirokado variety}

\author{Yukihide Takayama}
\address{Yukihide Takayama, Department of Mathematical
Sciences, Ritsumeikan University, 
1-1-1 Nojihigashi, Kusatsu, Shiga 525-8577, Japan}
\email{takayama@se.ritsumei.ac.jp}
%\date{18 M\"arz 2012, gearbeitet am 7. April 2012}

%%%% A4 format
%\setlength{\topsep}{0cm}
%\setlength{\parsep}{0cm}
%\setlength{\itemsep}{0cm}
\def\Coh#1#2{H_{\mm}^{#1}(#2)}
\def\eCoh#1#2#3{H_{#1}^{#2}(#3)}

\newcommand{\AppTh}{Theorem~\ref{approxtheorem} }
\def\da{\downarrow}
\newcommand{\ua}{\uparrow}
\newcommand{\namedto}[1]{\buildrel\mbox{$#1$}\over\rightarrow}
\newcommand{\bdel}{\bar\partial}
\newcommand{\proj}{{\rm proj.}}

\newenvironment{myremark}[1]{{\bf Note:\ } \dotfill\\ \it{#1}}{\\ \dotfill
{\bf Note end.}}
\newcommand{\transdeg}[2]{{\rm trans. deg}_{#1}(#2)}
\newcommand{\mSpec}[1]{{\rm m\hbox{-}Spec}(#1)}

\newcommand{\tbf}{{{\Large To Be Filled!!}}}

\pagestyle{plain}
\maketitle

\def\gCoh#1#2#3{H_{#1}^{#2}\left(#3\right)}
\def\subsetneq{\raisebox{.6ex}{{\small $\; \underset{\ne}{\subset}\; $}}}
\opn\Exc{Exc}

\def\HHom{{\cal Hom}}

\begin{abstract}
The Hirokado variety is a Calabi-Yau threefold in characteristic~$3$ 
that is not liftable either to characteristic~$0$ or 
the ring $W_2$ of the second Witt vectors.
Although Deligne-Illusie-Raynaud type Kodaira vanishing cannot be
applied, we show that  $H^1(X, L^{-1})=0$, for an ample line 
bundle such that $L^3$ has a non-trivial global section, holds for this variety. 
MSC Code: 14F17, %(Vanishing Theorems)
14J32, %(Calabi-Yau manifolds)
14G17, %(Positive characteristic ground fields)
14M15. %(Grassmannian, Schubert varieties, flag manifolds)
\end{abstract}

%%%%%%%%%%%%%%%%%%%%%%
%%%%%%%%%%%%%%%%%%%%%%
\section{Introduction}
%%%%%%%%%%%%%%%%%%%%%%
%%%%%%%%%%%%%%%%%%%%%%

Although  Calabi-Yau threefolds have the unobstructed deformation in characteristic~$0$
and every K3 surface in positive characteristic can be lifted to
characteristic~$0$,  %\cite{D, O}, 
the situation is quite different for Calabi-Yau
threefolds in positive characteristic. Namely, some of
Calabi-Yau threefolds in characteristic $2$ or $3$ cannot be 
lifted to characteristic~$0$ as shown in \cite{Hi99, Schr03, Hi07, Hi08, Scho}.
It is also known that there are unliftable 3-dimensional 
Calabi-Yau algebraic spaces in many positive characteristics \cite{CS, CSC}.

On the other hand, it is well known that the liftability problem is 
closely related to Kodaira vanishing theorem.  Namely, 
$W_2(K)$-liftability, where $W_2(K)$ is the ring of the second Witt vectors,
for varieties $X$ over an algebraically closed field $K$ of 
$\chara(K)=p>0$  with $\dim X\leq p$ is a sufficient condition for Kodaira 
vanishing \cite{DI}. However it is not clear whether Kodaira  vanishing 
holds without $W_2(K)$-liftability.

As far as the author is aware, it is not known whether Kodaira vanishing
holds on any of non-liftable Calabi-Yau threefolds in positive characteristic.
In \cite{TakPAMS} the author studied possibility of constructing a
Calabi-Yau threefold as a counterexample to Kodaira vanishing and
showed that it is possible if there is a surface of general type with
certain condition.  However, we do not know if such a surface exists.
Ekedahl \cite{Eke04} proved that both the
Hirokado variety \cite{Hi99} and the Schr\"oer variety \cite{Schr03}
are not $W_2(K)$-liftable.  But this does not 
necessarily imply that Kodaira vanishing does not hold.

In this paper, we show that on the  Hirokado variety Kodaira
vanishing holds to some extent. Namely,  we have  $H^1(X, L^{-1})=0$ for any ample line bundle $L$ such that $L^3$ has a 
non-trivial global section (Theorem~\ref{main}). 
This means that there is an example that (a part of) Kodaira  vanishing 
holds even if it is not $W_2(K)$-liftable. 
The proof is a rather easy consequence of Ekedahl's 
interpretation of the Hirokado variety as a Deligne-Lusztig type 
variety associated to the Grassmannian $Gr(2,4)$ together 
with the theory of pre-Tango structure \cite{Tan,  Ray, Mu11, Takeda}.

In the next section, we will briefly review the Ekedahl's reconstruction 
\cite{Eke04} of the Hirokado variety.
We also summarize the theory of pre-Tango structure,
which plays 
an essential role for construction of counterexamples 
to Kodaira vanishing.  
Then the main theorem will be proved in section~3.

%%%%%%%%%%%%%%%%%%%%%%%
%%%%%%%%%%%%%%%%%%%%%%%
\section{preliminaries}
%%%%%%%%%%%%%%%%%%%%%%%
%%%%%%%%%%%%%%%%%%%%%%%
In the following,  $K$ denotes an algebraically closed field of $\chara(K)=p>0$.

%%%%%%%%%%%%%%%%%%%%%%%%%%%%%
\subsection{Hirokado variety}
%%%%%%%%%%%%%%%%%%%%%%%%%%%%%

Let $\AA^3_K := \Spec K[x,y,z]\subset\PP^3_K$ be an affine open subset.
Consider the derivation $\delta$ on $\AA^3_K$:
\begin{equation}
\label{derivation}
   \delta = (x^p-x)\frac{\partial}{\partial x}
          + (y^p-y)\frac{\partial}{\partial y}
          + (z^p-z)\frac{\partial}{\partial z}.
\end{equation}
$\delta$ determines a vector field on $\PP^3_K$, which we will also denote 
by $\delta$. $\delta$ is $p$-closed, i.e., $\delta^p = f \delta$ 
for some element $f$
of the function field $K(X)$.
Moreover, 
$\delta$ has  $p^3 + p^2 + p +1$ isolated singular points, namely 
$\delta=0$ on $\PP^3({\FF_p}) = \{[z_0:z_1:z_2:z_3]\;\vert\; z_i^p = z_i,\; i=0,1,2,3\}$.
Since these singular points are isolated they can be resolved by 
one point blow-ups. Now we obtain the following diagram.
\begin{equation}
\label{diagram}
\begin{CD}
S           @>{\psi}>>   X \\
@V{\pi}VV             @V{\tilde{\pi}}VV \\
\PP^3_K     @>{\psi'}>> V       \\
\end{CD}
\end{equation}
where $\pi : S\To \PP^3_K$ is the blow-up centered at $\PP^3(\FF_p)$,
$\psi'$ and $\psi$ are the quotient maps by $\delta$ and $\pi^*\delta$
respectively. Namely,  $\OO_V = \{a\in \OO_{\PP^3_K}\;:\; \delta(a)=0\}$
and $\OO_X = \{a\in \OO_{S}\;:\; \pi^*\delta(a)=0\}$).
$\tilde{\pi}$ is the naturally induced morphism.
This variety $X$ is Calabi-Yau only when $p=3$, and we have 
the following non-liftability:

\begin{Theorem}[Corollary~2.3~\cite{Hi99} and Theorem~A~\cite{Eke04}]
If $p=3$, Hirokado variety $X$ is Calabi-Yau threefold 
that cannot be lifted to characteristic~$0$ or  $W_2(K)$.
Namely, there is no smooth projective morphism
\begin{equation*}
\psi: {\frak X}\To \Spec R,
\end{equation*}
where $R$ is a discrete valuation ring of mixed characteristic or 
$R = W_2(K)$, whose special fiber is isomorphic to $X$. 
In particular, Deligne-Illusie-Raynaud type Kodaira vanishing 
does not hold on $X$.
\end{Theorem}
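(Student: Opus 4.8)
The theorem has two independent parts --- that for $p=3$ the variety $X$ is a Calabi--Yau threefold, and that it admits no lift over a discrete valuation ring of mixed characteristic nor over $W_2(K)$ --- and the last sentence is then a formal consequence. For the Calabi--Yau part I would argue entirely inside diagram~\eqref{diagram}. Away from the zero locus of $\delta$ the derivation is a nowhere-vanishing $p$-closed vector field, so there $\psi'$ and $\psi$ are smooth purely inseparable quotients; the only delicate points are the $p^3+p^2+p+1$ isolated zeros. At each of them the leading term of $\delta$ is the radial field $-(x\partial_x+y\partial_y+z\partial_z)$, and a local computation shows that after the single blow-up $\pi$ its strict transform becomes transverse to the exceptional divisor, i.e. the induced height-one foliation $\mathcal{F}_S\subset T_S$ is a \emph{sub-bundle}. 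Since a smooth foliation of a smooth variety has a smooth quotient, $X$ is a smooth projective threefold. To identify $\omega_X$ I would combine three line-bundle identities: $\omega_S\cong\pi^*\omega_{\PP^3_K}\otimes\OO_S(2\sum_iE_i)$ for the blow-up of points in a threefold (the $E_i$ being the exceptional divisors), the canonical bundle formula $\omega_S\cong\psi^*\omega_X\otimes\mathcal{F}_S^{\otimes(p-1)}$ for the quotient by a rank-one foliation, and $\mathcal{F}_S\cong\pi^*\OO_{\PP^3_K}(1-p)\otimes\OO_S(\sum_iE_i)$, which comes out of the same local computation (the representative $\sum_ix_i^p\partial_{x_i}$ of $\delta$ being a section of $T_{\PP^3_K}(p-1)$ with zero scheme $\PP^3(\FF_p)$). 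Putting these together gives $\psi^*\omega_X\cong\pi^*\OO_{\PP^3_K}\bigl((p-1)^2-4\bigr)\otimes\OO_S\bigl((3-p)\sum_iE_i\bigr)$, which is trivial \emph{exactly} when $p=3$ --- this is where the characteristic is forced. Triviality of $\omega_X$ itself ($\psi$ is a universal homeomorphism, so $\omega_X$ is at worst $p$-torsion, and one checks it is trivial) and the vanishing $H^1(X,\OO_X)=0$ then follow from $H^1(S,\OO_S)=H^1(\PP^3_K,\OO)=0$ together with the standard filtration of $\psi_*\OO_S$ by powers of the conormal sheaf of the foliation.

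For non-liftability I would argue by contradiction. Suppose $X$ lifts over $W_2(K)$. Since $\dim X=3=p$, the Deligne--Illusie theorem \cite{DI} gives $E_1$-degeneration of the Hodge--de Rham spectral sequence of $X$ in the degrees $n\le p-1$, so that $\sum_{i+j=n}\dim_KH^j(X,\Omega_X^i)=\dim_KH^n_{\mathrm{dR}}(X/K)$ for $n=0,1,2$. One then computes, from the explicit structure in diagram~\eqref{diagram}, the de Rham cohomology dimensions $\dim_K H^n_{\mathrm{dR}}(X/K)$ for $n\le 2$ (for which one uses that $\psi$ is a universal homeomorphism and $S$ is the blow-up of $\PP^3_K$ at $p^3+p^2+p+1$ points) and the Hodge numbers $h^{i,j}(X)$ from the sheaves governing the quotient --- and this is exactly where the Cartier exact sequences $0\to\OO\to F_*\OO\to B^1\to0$, $0\to B^1\to Z^1\xrightarrow{C}\Omega^1\to0$ and the pre-Tango structure underlying diagram~\eqref{diagram} enter. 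The device that makes these computations feasible is Ekedahl's realization of $X$ as a Deligne--Lusztig type variety over $Gr(2,4)$ \cite{Eke04}, which translates them into cohomology of line bundles on the Grassmannian and its Frobenius neighbourhoods. The outcome is a strict inequality $\sum_{i+j=n}h^{i,j}(X)>\dim_K H^n_{\mathrm{dR}}(X/K)$ for $n=1$ or $n=2$ --- a genuine failure of $E_1$-degeneration --- contradicting the displayed equality; alternatively one may identify the $W_2(K)$-lifting obstruction class of $X$ in $H^2(X,T_X)\cong H^2(X,\Omega_X^2)$ and show it is nonzero. Non-liftability to characteristic $0$ follows as well, by the same argument applied to a hypothetical complex lift (whose Hodge numbers would in addition obey Hodge symmetry) or by reducing that case to the previous one by spreading out. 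The last sentence is then immediate: the Deligne--Illusie--Raynaud route to Kodaira vanishing consists precisely in lifting modulo $p^2$ and invoking $E_1$-degeneration, and we have just seen that this route is closed for $X$.

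The step I expect to be the main obstacle is the explicit determination, in the second part, of the Hodge numbers $h^{i,j}(X)$ (and the de Rham cohomology) of a threefold assembled from $\PP^3_K$ by blowing up $p^3+p^2+p+1$ points and then dividing by a height-one, rank-one foliation. Concretely this means controlling every relevant $H^j(X,\Omega_X^i)$ through $\psi_*$ and the Cartier and pre-Tango sequences while keeping careful track of the contributions of the exceptional divisors, and it is precisely for this bookkeeping that the Grassmannian model --- with its wealth of acyclic and explicitly computable line bundles --- is indispensable; a head-on computation on $\PP^3_K$ alone would be substantially harder, which is why Ekedahl's reinterpretation, rather than the original construction, is the natural point of departure.
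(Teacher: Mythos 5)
First, a point of order: the paper offers no proof of this statement at all --- it is imported verbatim from Hirokado (Corollary~2.3 of \cite{Hi99}) and Ekedahl (Theorem~A of \cite{Eke04}) and used as a black box --- so your reconstruction can only be measured against those sources. Your Calabi--Yau half does match Hirokado's actual argument: the local analysis at the $p^3+p^2+p+1$ zeros (where $\delta$ has linear part $-(x\partial_x+y\partial_y+z\partial_z)$), the identification of the saturated foliation as $\pi^*\OO_{\PP^3_K}(1-p)\otimes\OO_S(\sum_i E_i)$, and the combination with $\omega_S\iso\pi^*\omega_{\PP^3_K}\otimes\OO_S(2\sum_i E_i)$ and Ekedahl's formula $\omega_S\iso\psi^*\omega_X\otimes(\det\mathcal{F}_S)^{\otimes(p-1)}$ give exactly $\psi^*\omega_X\iso\pi^*\OO_{\PP^3_K}((p-1)^2-4)\otimes\OO_S((3-p)\sum_i E_i)$, trivial precisely for $p=3$. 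That half is sound, modulo the checks you defer (injectivity of $\psi^*$ on $\Pic$, and $h^1(\OO_X)=h^2(\OO_X)=0$).

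The non-liftability half has two genuine gaps. (i) For $W_2(K)$: your plan --- exhibit a failure of $E_1$-degeneration in some degree $n\le 2$ and contradict Deligne--Illusie --- is indeed the shape of Ekedahl's proof, but the decisive step (``the outcome is a strict inequality for $n=1$ or $n=2$'') is only announced, never carried out; you do not say which Hodge number exceeds which de Rham number, and without that the argument proves nothing. You flag this yourself as the main obstacle, but it is not bookkeeping: it is the entire content of Theorem~A, and it is precisely what the complete-intersection model and the computation of $\Psi_*\Omega^i_{\tilde{\cal F}}$ in \cite{Eke04} are for. (ii) For characteristic $0$: the proposed reduction ``by spreading out'' to the $W_2$ case fails, because for a ramified mixed-characteristic discrete valuation ring $R$ the quotient $R/p^2R$ is not $W_2(K)$, so non-liftability to $W_2(K)$ does not obstruct liftability over $R$. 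Hirokado's actual argument is independent of Deligne--Illusie and lives in degree $3$, not in degrees $1$ and $2$: one computes $b_3(X)=b_3(S)=b_3(\PP^3_K)=0$ ($\ell$-adic cohomology is unchanged by the purely inseparable quotient, and point blow-ups do not affect $H^3$), whereas any smooth projective lift to characteristic $0$ would again have trivial canonical bundle and hence $b_3\ge h^{3,0}+h^{0,3}=2$, contradicting smooth proper base change. Your parenthetical appeal to Hodge symmetry points in this direction, but the key computed invariant $b_3(X)=0$ never appears in your proposal, so the characteristic-$0$ case is not actually established.
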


%%%%%%%%%%%%%%%%%%%%%%%%%%%%%%%%%%%%%%%%%%%%%%%%%%%%%%%
\subsection{Ekedahl's reconstruction of Hirokado variety}
%%%%%%%%%%%%%%%%%%%%%%%%%%%%%%%%%%%%%%%%%%%%%%%%%%%%%%%

\subsubsection{Foliation and non-standard Gauss map} %%%%%%%%%%%%%%%%%%%%%%%%%%%%%
For a smooth variety $X$ with $n=\dim X$ 
and  its tangent sheaf ${\cal T}_X$,
a subbundle ${\cal E}\subset {\cal T}_X$ of constant rank $r$
that is closed under Lie brackets and $p$-th powers
is called a {\em 1-foliation},  on $X$.  For simplicity, we will call it 
{\em foliation} in the following.
For a purely inseparable finite flat morphism 
$f : X\to Y$
of degree $p$, the kernel ${\cal E}:= \Ker df$
of the differential of $f$  is a foliation. 
In this case, $f$ is called 
the {\em quotient map} by ${\cal E}$ and 
we denote as $Y = X/{\cal E}$. Then we have 
$\OO_Y \iso  \{a\in\OO_X\;\vert\; \delta(a)=0\mbox{ for all }\delta\in{\cal E}\}$.
See, for example, \cite{Eke87, Eke88} for the detail about foliation.

Let ${\cal G}$ be the Grassmannian bundle of $r$-dimensional 
subspaces of ${\cal T}_X$.
By considering a coordinate neighborhood $U\; (\subset X)$ 
that trivializes ${\cal T}_X$ and thus ${\cal G}$,
we can define a kind of  Gauss
map, which is 
the composition of the section
\begin{equation*}
e : U\To {\cal G} \qquad\mbox{such that }
U\ni x\longmapsto {\cal E}_x \in {\cal G}_x
\end{equation*}
and the projection ${\cal G}\vert_U\To Gr(r,n):= \{V\subset K^n\;:\; \dim V= r\}$.

Now we consider the special case of $U=\AA^n_K \subset X := \PP^n_K$
together with a foliation ${\cal E}$ of rank~$r$ on $U$ that can be 
extended to $X$.
By pulling back with the natural map
$\varphi: \AA^{n+1}-\{0\} \to \PP^n$,
 we obtain the foliation
$\varphi^*{\cal E}$ on $\varphi^{-1}(U)$, which induces
the Gauss map in our sense
\begin{equation*}
g' : \varphi^{-1}(U)\To Gr(r+1, n+1). 
\end{equation*}
Since this map is invariant 
under  the ${\bf G}_m$-action, we obtain the new Gauss map
\begin{equation*}
    g: U \To Gr(r+1, n+1)
\end{equation*}
which can be extended to $X =\PP^n_K$.
Moreover, we have 
\begin{Proposition}[cf. Proposition~2.2~\cite{Eke04}]
\label{EK:prop2.2}
The Gauss map $g: \PP^n_K\to Gr(r+1,n+1)$ factors through the 
quotient map $\PP^n_K\to \PP^n_K/{\cal E}$ by ${\cal E}$. In particular,
there exists the induced Gauss map
\begin{equation*}
       g_{\cal E} : \PP_K^n/{\cal E}\To Gr(r+1,n+1)
\end{equation*}
such that $g$ is the composition of the quotient map and 
$g_{\cal E}$.
\end{Proposition}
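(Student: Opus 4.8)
The plan is to show that the Gauss map $g$ is constant on the fibers of the quotient map $\PP^n_K \to \PP^n_K/\cal E$, which by the universal property of the quotient will give the factorization. Since being constant on fibers is a local statement (one may work on the affine chart $U = \AA^n_K$), the task reduces to verifying that each derivation $\delta \in \cal E$ annihilates the pullback to $U$ of the coordinate functions on (an affine chart of) $Gr(r+1, n+1)$ that define the map $g$.

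First I would set up coordinates explicitly. Trivialize $\cal T_U$ using $\partial/\partial x_1, \dots, \partial/\partial x_n$, so that a rank-$r$ foliation $\cal E$ is, at each point, an $r$-dimensional subspace of $K^n$; then describe $\varphi^*\cal E$ over $\varphi^{-1}(U) \subset \AA^{n+1} \setminus \{0\}$. Writing $\varphi$ in the usual way (say $x_i = t_i/t_0$ on the chart $t_0 \neq 0$), the pullback of a derivation $\delta = \sum a_i(x)\,\partial/\partial x_i$ picks up an extra component coming from the Euler relation, and the resulting rank-$(r+1)$ subbundle $\varphi^*\cal E \subset \cal T_{\varphi^{-1}(U)}$ is spanned by the Euler vector field together with lifts of a basis of $\cal E$. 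The Gauss map $g'$ sends a point to this $(r+1)$-plane in $K^{n+1}$; passing to $\bf G_m$-invariants yields $g$. The key computation is then that for any $\delta \in \cal E$, the Plücker (or affine-chart) coordinates of the plane $\varphi^*\cal E$ are killed by $\varphi^*\delta$ — equivalently, that moving along the flow of $\delta$ does not change the plane $\cal E_x$ modulo the line it already contains. This is precisely the condition that $\cal E$ is closed under Lie brackets: $[\delta, \delta'] \in \cal E$ for $\delta, \delta' \in \cal E$ says exactly that the derivative of the moving frame of $\cal E$ along $\delta$ stays inside $\cal E$, which is what makes the Gauss coordinates constant along $\cal E$-orbits.

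Concretely, I would choose a local basis $\delta_1, \dots, \delta_r$ of $\cal E$ near a point and, on a suitable affine chart of $Gr(r,n)$, express the plane $\cal E_x$ as the row span of an $r \times n$ matrix $M(x)$ in reduced form (identity block plus a variable block $B(x)$). The entries of $B$ are the local coordinates on $Gr(r,n)$, and their pullbacks under $g$ (after incorporating the Euler direction to get into $Gr(r+1,n+1)$) are rational functions on $U$. Applying $\delta_k$ to these and using $[\delta_k, \delta_j] = \sum_\ell c_{kj}^\ell \delta_\ell$ shows that $\delta_k$ annihilates each entry of $B$: differentiating the defining relations of the reduced matrix along $\delta_k$ produces exactly the structure constants, which cancel. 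Hence every $\delta \in \cal E$ kills all local coordinates of $g$, so $g$ is constant on $\cal E$-orbits, i.e. on fibers of $\PP^n_K \to \PP^n_K/\cal E$. Since the quotient is a geometric quotient for this foliation, $g$ descends to $g_{\cal E}: \PP^n_K/\cal E \to Gr(r+1,n+1)$ with $g = g_{\cal E} \circ (\text{quotient map})$, and the extension to all of $\PP^n_K$ is automatic because $g$ was already defined there.

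The main obstacle I expect is bookkeeping rather than a conceptual gap: correctly tracking how the Euler vector field interacts with the lifts $\varphi^*\delta$ so that the rank jumps from $r$ to $r+1$ cleanly and no degeneracy occurs (one must check the Euler field is not already in the span, which holds since $\cal E \subset \cal T_{\PP^n}$ lives in the "horizontal" directions), and ensuring the computation is chart-independent so the global factorization over $\PP^n_K$ is legitimate. A secondary point is that one should confirm $\cal E$ being $p$-closed is not actually needed for this particular statement — Lie-bracket closure alone drives the computation — but $p$-closedness is what guarantees $\PP^n_K/\cal E$ exists as a nice variety in the first place, so it enters through the hypothesis that the quotient map is available.
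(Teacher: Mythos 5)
Your reduction is the correct one: since $\OO_{\PP^n_K/{\cal E}}$ is by definition the subsheaf of functions annihilated by ${\cal E}$, the map $g$ factors through the quotient if and only if every $\delta\in{\cal E}$ kills the pullback under $g$ of the local coordinates of $Gr(r+1,n+1)$ (the ``constant on fibres'' picture is not really meaningful for a purely inseparable quotient, which is a homeomorphism, but you do pass to the right infinitesimal criterion; the paper itself gives no argument and simply cites Ekedahl). The gap is in the key computation. Closure under Lie brackets does \emph{not} imply that ${\cal E}$ kills the Grassmann coordinates of ${\cal E}$. With a reduced local frame $\delta_j=\partial_j+\sum_{\ell>r}B_{j\ell}\partial_\ell$ one has
\begin{equation*}
[\delta_k,\delta_j]=\sum_{\ell>r}\bigl(\delta_k(B_{j\ell})-\delta_j(B_{k\ell})\bigr)\partial_\ell ,
\end{equation*}
and since the only section of ${\cal E}$ with vanishing $\partial_1,\dots,\partial_r$--components is $0$, bracket closure yields exactly the symmetry $\delta_k(B_{j\ell})=\delta_j(B_{k\ell})$ --- not the vanishing $\delta_k(B_{j\ell})=0$. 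In particular for $r=1$, which is the case actually needed here, bracket closure is entirely vacuous. Indeed the statement you are implicitly proving --- that the Gauss map of any foliation is constant along the foliation --- would say the (infinitesimal) leaves are linear, and it is false: take $p=3$, $n=2$, ${\cal E}$ generated by $\theta=\partial_x+x\partial_y$, i.e.\ by $X_0\partial_{X_1}+X_1\partial_{X_2}$; this is bracket-closed (rank one) and $p$-closed ($\theta^3=0$), $\varphi^*{\cal E}=\langle E,\theta\rangle$ with $E$ the Euler field, the Gauss map is $[X_0^2:X_0X_1:X_1^2-X_0X_2]$ in Pl\"ucker coordinates, and $\theta(X_1/X_0)=1\neq0$. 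So no argument using only bracket- and $p$-closedness can succeed; the proposition genuinely depends on the particular foliation at hand.

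What makes the factorization work for the Hirokado foliation is its Frobenius-twisted shape. Here $\varphi^*{\cal E}=\langle\,\sum_iX_i^p\partial_i,\ \sum_iX_i\partial_i\,\rangle$, so the plane at $x$ is spanned by $x$ and $F(x)$ and the Pl\"ucker coordinates are $q_{ij}=X_iX_j^p-X_jX_i^p$. Setting $\delta'=\sum_kX_k^p\partial_k$ one computes $\delta'(q_{ij})=X_i^pX_j^p-X_j^pX_i^p=0$, because the remaining terms carry a factor $\partial_j(X_j^p)=pX_j^{p-1}=0$; and the Euler field multiplies each $q_{ij}$ by its degree $p+1$, hence kills every ratio $q_{ij}/q_{kl}$. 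Thus the vanishing comes from $d(f^p)=0$ in characteristic $p$ applied to these specific generators --- precisely the ingredient your last paragraph sets aside as ``not actually needed.'' That remark has it backwards: the characteristic-$p$ form of the generators drives the whole computation, and Lie-bracket closure contributes nothing in the relevant rank-one situation.
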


\subsubsection{Hirokado foliation on $\PP^n_K$} %%%%%%%%%%%%%%%%%%%%%%%%%%%%
Now we consider the case of $n=3$ and $r=1$. 
Let ${\cal E}$ be the foliation on $\AA^3_k$ and on $\PP^3_K$ 
generated by the derivation $\delta$ as in (\ref{derivation}).
Then we have 
\begin{equation*}
\varphi^*{\cal E}
 =\left\langle
    \sum_{i=1}^{4} X_i^p\frac{\partial}{\partial X_i},\;
    \sum_{i=1}^{4} X_i\frac{\partial}{\partial X_i}
  \right\rangle
\end{equation*}
where the natural map $\varphi: \AA^4-\{0\} \to \PP^3$ is 
defined by $\varphi((X_1,X_2,X_3,X_4)) = [X_1:X_2:X_3:X_4]$.
Then by Proposition~\ref{EK:prop2.2} we obtain 
the Gauss map $g':\PP^3_K\backslash \PP^3_K(\FF_p) \to Gr(2,4)$
and also $g_{{\cal E}'}: V\to Gr(2,4)$ such that 
$g' = g_{{\cal E}'}\circ \psi'$.
Moreover, these Gauss maps can be extended to the blown-up
varieties and  we have the Gauss maps
\begin{equation*}
\hat{g}: S\To Gr(2,4)\quad\mbox{and}\quad
g : X\To Gr(2,4)
\end{equation*}
such that $\hat{g} = g\circ\psi$. 
(Lemma~2.3~\cite{Eke04}).

\subsubsection{the Hirokado variety as a Deligne-Lusztig type variety}%%%%%%%%%%%%%%%
Now we consider a subvariety ${\cal F}$ of the Grassmannian $Gr(2,4)$
defined as 
\begin{equation*}
{\cal F}  =  \{ W\subset K^{4}\;:\; \dim W =2,\; W\cap F^*W\ne\emptyset\}
\end{equation*}
and the flag variety
\begin{equation*}
\tilde{{\cal F}}
      =  \{ L\subset W \subset K^{4}\;:\; \dim L=1,\; \dim W=2,\; L\subset F^*W\}
\end{equation*}
where $F: K^{4} \To K^{4}$ is the Frobenius morphism. We have 
the forgetful morphism
\begin{equation*}
\Psi: \tilde{\cal F}\To {\cal F}
\qquad\mbox{such that } (L\subset W \subset K^{4})\longmapsto (W\subset K^{4}).
\end{equation*}
Then we have 
\begin{Proposition}[Proposition~2.4~\cite{Eke04}]
\label{eke:prop2.4}
${\cal F}$ contains the image of $V$ by the Gauss map $g_{{\cal E}'}$, 
$\tilde{{\cal F}}\iso X$ and  $\Psi$ is a desingularization of ${\cal F}$,
whose exceptional set has codimension~$2$.
\end{Proposition}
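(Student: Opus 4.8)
The plan is to verify the three assertions separately, working directly from the defining equations of ${\cal F}$ and $\tilde{\cal F}$ inside the relevant Grassmannian and flag varieties.

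\medskip

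First I would show $g_{{\cal E}'}(V)\subseteq {\cal F}$. Recall from the construction that the original Gauss map $g'$ on $\PP^3_K\setminus\PP^3_K(\FF_p)$ sends a point to the $2$-plane in $K^4$ spanned by the two vector fields $\sum X_i^p\partial/\partial X_i$ and $\sum X_i\partial/\partial X_i$ evaluated at that point; that is, at $[a_1:a_2:a_3:a_4]$ the image is $W = \langle (a_1^p,\dots,a_4^p),\,(a_1,\dots,a_4)\rangle$. Since $F^*W$ is spanned by the $p$-th powers of a basis of $W$, it contains $(a_1^p,\dots,a_4^p)$, which already lies in $W$; hence $W\cap F^*W\neq 0$, so the image of $g'$ lies in ${\cal F}$. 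Because $g_{{\cal E}'}$ is induced by $g'$ through the quotient map (Proposition~\ref{EK:prop2.2} applied as in the construction of $g_{{\cal E}'}$), and ${\cal F}$ is closed, the closure of the image, hence $g_{{\cal E}'}(V)$, is contained in ${\cal F}$.

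\medskip

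Second I would show $\Psi:\tilde{\cal F}\to{\cal F}$ is a desingularization. I would first check that $\tilde{\cal F}$ is smooth: it fibers over $Gr(2,4)$ (or rather over the open part where $F^*W$ behaves generically) with the fiber over $W$ being $\PP(W\cap F^*W)$, and a dimension count using that $F$ is purely inseparable shows $\tilde{\cal F}$ has the expected dimension $3$; smoothness can be verified in local coordinates on the flag variety by writing down the equation $L\subset F^*W$ and checking the Jacobian has maximal rank. Then $\Psi$ is birational because over the locus $U_0\subset{\cal F}$ where $\dim(W\cap F^*W)=1$ the fiber $\PP(W\cap F^*W)$ is a single point, and $U_0$ is a dense open subset of ${\cal F}$ (the complement, where $W\cap F^*W$ is $2$-dimensional, i.e.\ $W= F^*W$ so $W$ is Frobenius-stable, is a proper closed subset). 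Finally, the exceptional set of $\Psi$ is exactly $\Psi^{-1}({\cal F}\setminus U_0)$, and over each such $W$ the fiber is $\PP^1$; a dimension count of $\{W : W=F^*W\}$ (the Frobenius-fixed $2$-planes, a finite set, being the $\FF_p$-rational points of $Gr(2,4)$) shows ${\cal F}\setminus U_0$ has dimension $1$, so the exceptional set has dimension $2$ inside the threefold $\tilde{\cal F}$, i.e.\ codimension~$1$ --- here I would need to recheck against the claimed codimension~$2$, which suggests the singular locus of ${\cal F}$ is larger or the counting is subtler than the naive estimate, and this is where I expect to consult \cite{Eke04} directly.

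\medskip

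Third, for $\tilde{\cal F}\iso X$: the Gauss maps $\hat g:S\to Gr(2,4)$ and $g:X\to Gr(2,4)$ from the previous subsection, together with the blow-up structure, give a morphism $X\to{\cal F}$; I would lift this to a morphism $X\to\tilde{\cal F}$ by remembering the line $L$ coming from the exceptional divisor of the blow-up $\pi:S\to\PP^3_K$ (the blow-up at $\PP^3(\FF_p)$ records a direction, which is precisely the extra line $L$ in the flag), and then check this is an isomorphism by comparing the two birational descriptions: both $X$ and $\tilde{\cal F}$ are desingularizations of the same variety $V\cong{\cal F}$ (via $\tilde\pi$ and $\Psi$ respectively), and one checks they resolve the same singular points in the same way by a local computation over $\PP^3(\FF_p)$.

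\medskip

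The main obstacle, I expect, is the precise local analysis over the Frobenius-fixed locus: reconciling the blow-up of $\PP^3_K$ at the $p^3+p^2+p+1$ points of $\PP^3(\FF_p)$ with the flag-variety resolution $\Psi$, and in particular pinning down the codimension of the exceptional set --- getting codimension~$2$ rather than the naive codimension~$1$ requires understanding that $\Psi$ is an isomorphism away from a codimension~$\geq 2$ locus, which means the bad locus of $\Psi$ is smaller than where the fiber jumps, or that the geometry of ${\cal F}$ near its singular points is more degenerate than a simple dimension count suggests. For this I would lean directly on Proposition~2.4 of \cite{Eke04}.
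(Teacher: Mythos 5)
The paper itself offers no proof of this proposition: it is imported verbatim from Ekedahl (Proposition~2.4 of \cite{Eke04}), so your reconstruction is going beyond what the paper does. Your first part is correct and complete: with $W=\langle(a_1^p,\dots,a_4^p),(a_1,\dots,a_4)\rangle$ one has $(a_1^p,\dots,a_4^p)\in W\cap F^*W$, so the image of the Gauss map lies in ${\cal F}$, and closedness of ${\cal F}$ handles the extension to all of $V$.

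The genuine gap is in your second part, and it is a dimension slip rather than a missing idea. You correctly identify ${\cal F}\setminus U_0$, the locus where the fiber of $\Psi$ jumps, as the set of Frobenius-fixed $2$-planes $W=F^*W$, i.e.\ the $\FF_p$-rational points of $Gr(2,4)$ lying on ${\cal F}$, and you correctly observe that this is a \emph{finite} set. But a finite set has dimension $0$, not $1$ as you then assert; consequently the exceptional set, a disjoint union of fibers $\PP(W)\iso\PP^1$ over finitely many points, has dimension $0+1=1$, which is codimension $2$ in the threefold $\tilde{\cal F}$ --- exactly the claimed codimension, with nothing to reconcile. Your ensuing worry that ``the counting is subtler than the naive estimate'' and your decision to fall back on \cite{Eke04} both stem from this arithmetic error; the naive estimate is the right one, and it is consistent with the paper's later use of $\Psi$ as a \emph{small} resolution in the proof of Proposition~\ref{ekedahl:thm3.6(i)}. (One can confirm independently that $\Sing({\cal F})$ is finite: the Jacobian of the complete intersection $q=b(X,X^p)=0$ drops rank exactly where $\nabla q$ is proportional to $\nabla b=(\nabla q)^{(p)}$, i.e.\ at $\FF_p$-rational points.) The remaining soft spots --- smoothness of $\tilde{\cal F}$, the fact that a projective morphism with one-point fibers onto the normal locus $U_0$ is actually an isomorphism (not automatic in characteristic $p$), and the identification $X\iso\tilde{\cal F}$ over the $40$ blown-up points --- are left as sketches deferring to \cite{Eke04}, which is acceptable for a cited result but means your proposal is an outline rather than a proof.
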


It is well known that $Gr(2,4)$ is the hypersurface in $\PP^5_K$
defined by the Pl\"ucker relation 
\begin{equation*}
    q(X) := X_{12}X_{34} - X_{12}X_{24} + X_{14}X_{23} 
\end{equation*}
where $X_{ij}$ ($1\leq i<j\leq 4$) are the  homogeneous coordinates of $\PP^5_K$.
On the other hand, we can regard $Gr(2,4)$ as the set of projective 
lines in $\PP^3_K$. Thus every element $W\in {\cal F}$ can be regarded 
as a projective line in $\PP^3_K$ that intersects with the projective 
line represented by $F^*W$. This relation 
is described  by the bilinearization of the Pl\"ucker relation
\begin{equation*}
b(X, X^p) = X_{12}X^p_{34} - X_{12}X^p_{24} + X_{14}X^p_{23} 
+ X_{34}X^p_{12} 
- X_{24}X^p_{13} 
+ X_{23}X^p_{14},
\end{equation*}
which is a $(p+1)$-form (see, for example,  Proposition~12.1.1 \cite{ems}).
Thus ${\cal F}$ is 
a complete intersection of type $(2,p+1)$ in $\PP^5_K$.

Consequently, we have the following.
\begin{Theorem}[T.~Ekedahl \cite{Eke04}]
\label{ekedahl:interpretation}
The Hirokado variety $X$ is 
obtained from a complete intersection of type $(2,4)$ 
in $\PP^5_K$ by blowing up centered at the $40$ points 
in $\PP^5_{\FF_3}$.
\end{Theorem}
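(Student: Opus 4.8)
The plan is to combine the construction of diagram~(\ref{diagram}) with Ekedahl's reconstruction and to add one local computation, namely that the contraction $\tilde\pi\colon X\To V$ of~(\ref{diagram}) is a blow-up at points. Fix $p=3$. Diagram~(\ref{diagram}) presents $X$ as the quotient $S/\pi^*\delta$ of the blow-up $\pi\colon S\to\PP^3$ of $\PP^3$ at its $40$ points $\PP^3(\FF_3)$, and $V=\PP^3/\langle\delta\rangle$ as the corresponding quotient of $\PP^3$ itself, with $\tilde\pi\colon X\To V$ the induced morphism; by Ekedahl's reconstruction (Proposition~\ref{eke:prop2.4} and the surrounding discussion, via the Gauss map $g$ and its factorisation through the quotient of Proposition~\ref{EK:prop2.2}), $V$ is a complete intersection of type $(2,4)$ in $\PP^5_K$; concretely, it is identified with the intersection ${\cal F}$ of the Pl\"ucker quadric $\{q(X)=0\}$ with the quartic $\{b(X,X^3)=0\}$ recalled above. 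It therefore suffices to show that $\tilde\pi\colon X\To V$ is the blow-up of $V$ along a reduced set $Z$ of $40$ points of $\PP^5_{\FF_3}$.

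First I would locate $Z$. The quotient maps $\psi\colon S\To X$ and $\psi'\colon\PP^3\To V$ of~(\ref{diagram}) are purely inseparable, hence universal homeomorphisms, and the $p$-closed vector field $\delta$ is already a rank-one subbundle of the tangent sheaf of $\PP^3$ away from $\PP^3(\FF_3)$. Consequently $V$ is smooth precisely off the $40$ points $\psi'(\PP^3(\FF_3))$, $\tilde\pi$ is an isomorphism over the smooth locus of $V$, and its exceptional set lies over exactly those $40$ points; these are $\FF_3$-rational, being images of $\FF_3$-rational points under a map defined over $\FF_3$, so under the embedding $V\hookrightarrow\PP^5$ they become $40$ points of $\PP^5_{\FF_3}$, which is the set $Z$.

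Next I would verify that $\tilde\pi$ is the blow-up of $V$ at $Z$, which is local on $V$ at each point $W_0\in Z$. By~(\ref{diagram}) it is enough to examine, near a single point $P\in\PP^3(\FF_3)$, the blow-up of $\PP^3$ at $P$ followed by the quotient by $\pi^*\delta$, and to compare it with the blow-up of $V$ at the image of $P$. There $\delta$, its $p$-closure, $\pi^*\delta$ (which equals a local equation of the exceptional $\PP^2$ times an explicit vector field transverse to it), and the quotient map are all written down by hand; one computes the local ring of $V$ at $W_0$ from $q$ and $b$, or equivalently from the foliation quotient $\AA^3/\langle\delta\rangle$ localised at $P$, and checks, via the universal property of the blow-up, that the natural morphism from $X$ to the blow-up of $V$ at $W_0$ is an isomorphism over a neighbourhood of $W_0$. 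Since the $40$ points of $Z$ have pairwise disjoint neighbourhoods and blowing up is local on the base, this local comparison yields the global assertion, i.e.\ $X\iso\tilde{\cal F}$ is the blow-up of ${\cal F}$ at the $40$ points of $\PP^5_{\FF_3}$.

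The step I expect to be the real obstacle is this last local comparison: identifying the singularity of $V$ at $Z$ (an $\AA^3/\langle\delta\rangle$-singularity, morally a cone over a Veronese surface, but needing care because of the higher-order terms of $\delta$) and confirming that the foliation quotient of the blow-up $S\to\PP^3$ is exactly the point-centred blow-up of $V$ and not some other small or divisorial modification. Keeping the identification of $V$ with the explicit complete intersection ${\cal F}$, and of its $40$ singular points with $\PP^3(\FF_3)\subset\PP^5_{\FF_3}$, accurate through all the maps of~(\ref{diagram}) also requires some bookkeeping, and is cleanest done via the diagram rather than directly inside $Gr(2,4)$.
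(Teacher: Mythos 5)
The paper does not actually reprove this statement: it is quoted from Ekedahl and presented as an immediate consequence of Proposition~\ref{eke:prop2.4} together with the computation that ${\cal F}$ is a complete intersection of type $(2,p+1)$, and the resolution in question is the forgetful map $\Psi\:\tilde{\cal F}\To{\cal F}$, i.e.\ the extended Gauss map $g\:X\To{\cal F}$ --- not the contraction $\tilde\pi\:X\To V$ from diagram~(\ref{diagram}). Your proposal instead identifies the $(2,4)$ complete intersection with $V=\PP^3_K/\langle\delta\rangle$ and reduces everything to showing that $\tilde\pi$ is a blow-up at the $40$ singular points. The step ``$V$ is identified with ${\cal F}$'' is a genuine gap, and it cannot be repaired: Proposition~\ref{eke:prop2.4} only asserts that ${\cal F}$ \emph{contains the image} of $V$ under $g_{{\cal E}'}$, and $g_{{\cal E}'}$ is in fact only a rational map on $V$, undefined at the $40$ singular points (this is precisely why one must pass to the blow-up $S$ to extend the Gauss map to a morphism); moreover it contracts the images of the $\FF_3$-rational lines of $\PP^3_K$ to the $\FF_3$-points of $Gr(2,4)$, which form the singular locus of ${\cal F}$. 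Two independent checks that $V\not\iso{\cal F}$: (i) the hyperplane class $H$ of a $(2,4)$ complete intersection is Cartier with $H^3=8$, so $\psi'^*H=dH_{\PP^3}$ for some positive integer $d$ with $d^3=\deg(\psi')\cdot 8=24$, which has no integer solution; (ii) by your own local description the singular points of $V$ are cones over a cubic Veronese surface, whose embedding dimension ($10$) already exceeds $5$, so such a point cannot lie on any threefold in $\PP^5_K$.

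More structurally, you are conflating two genuinely different contractions of $X$: $\tilde\pi$ contracts the $40$ exceptional $\PP^2$'s (with normal bundle $\OO(-3)$) to the singular points of $V$, whereas $\Psi$ has exceptional set of codimension~$2$ by Proposition~\ref{eke:prop2.4} --- it is a small resolution contracting curves lying over the $\FF_3$-points of $Gr(2,4)$, and in particular it is not an isomorphism composed with $\tilde\pi$. The codimension-$2$ property is exactly what the paper uses downstream ($\Psi_*\Omega^1_{\tilde{\cal F}}=\Omega^1_{\cal F}$, hence $H^0(X,\Omega^1_X)=0$), and your route through $V$ does not deliver it. A proof of the theorem that is faithful to its intended content has to work with the Gauss map $g\:X\To{\cal F}\subset Gr(2,4)$ and the fibres $\PP(W\cap F^*W)$ of $\Psi$, not with the foliation quotient $\tilde\pi$; your local analysis of $\tilde\pi$ near the exceptional $\PP^2$'s is reasonable in itself, but it proves a statement about $V$, not about the complete intersection ${\cal F}$.
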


Now we consider the Hodge cohomologies of the Hirokado variety.

\begin{Proposition}[cf. Theorem~3.6(i) and Proposition~3.1(iii)~\cite{Eke04}]
\label{ekedahl:thm3.6(i)}
$\Psi_*\Omega_{\tilde{\cal F}}^1 = \Omega_{\cal F}^1$.
\end{Proposition}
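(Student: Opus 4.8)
\ The plan is to write down the tautological comparison morphism between the two sheaves and to upgrade it to an isomorphism by a codimension-two argument. Functoriality of K\"ahler differentials produces the canonical $\OO_{\cal F}$-linear map
\[
\phi\colon\ \Omega^1_{\cal F}\To\Psi_*\Omega^1_{\tilde{\cal F}},
\]
adjoint to the natural map $\Psi^*\Omega^1_{\cal F}\to\Omega^1_{\tilde{\cal F}}$. Since ${\cal F}$ is a complete intersection of type $(2,p+1)$ in $\PP^5_K$ it is Cohen--Macaulay, and by Proposition~\ref{eke:prop2.4} its singular locus lies in $\Psi(\Exc\Psi)$, which has codimension $\geq2$ in ${\cal F}$; hence ${\cal F}$ is normal (Serre's criterion) and $\Psi$ restricts to an isomorphism over the open set $U:={\cal F}\setminus\Psi(\Exc\Psi)$, so $\phi|_U$ is an isomorphism. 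By the standard fact that a homomorphism of coherent sheaves on an integral normal variety which is an isomorphism over an open subset whose complement has codimension $\geq2$, and whose source satisfies Serre's condition $S_2$ while its target is torsion-free, must itself be an isomorphism, it remains only to check: (i)~$\Psi_*\Omega^1_{\tilde{\cal F}}$ is torsion-free, and (ii)~$\Omega^1_{\cal F}$ satisfies $S_2$.

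Claim~(i) is immediate: $\Omega^1_{\tilde{\cal F}}$ is locally free because $\tilde{\cal F}$ is smooth, hence torsion-free, and the direct image of a torsion-free sheaf under the dominant morphism $\Psi$ of integral varieties is again torsion-free (a nonzero function on an open of ${\cal F}$ pulls back to a non-zero-divisor on its preimage). For claim~(ii) I would invoke the conormal exact sequence of the complete intersection ${\cal F}\subset\PP^5_K$,
\[
0\To\OO_{\cal F}(-2)\dirsum\OO_{\cal F}(-p-1)\To\Omega^1_{\PP^5_K}|_{\cal F}\To\Omega^1_{\cal F}\To0 ,
\]
which is exact on the left because ${\cal F}$ is a reduced local complete intersection over the perfect field $K$, hence generically smooth, so that the conormal map is injective at the generic point and therefore (its source being locally free) injective. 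The two left-hand terms are locally free over $\OO_{\cal F}$, so maximal Cohen--Macaulay at every $x\in{\cal F}$, and the usual depth estimate for a short exact sequence gives $\depth_{\OO_{{\cal F},x}}(\Omega^1_{\cal F})_x\geq\dim\OO_{{\cal F},x}-1$. Hence $\Omega^1_{\cal F}$ is automatically $S_2$ at every point $x$ with $\dim\OO_{{\cal F},x}\geq3$ and at every smooth point of ${\cal F}$ (where it is locally free); the only possible failure of $S_2$ would be along a component of $\Sing{\cal F}$ of codimension exactly $2$. The remaining, and crucial, step is to rule this out by showing $\Sing{\cal F}$ has codimension $\geq3$. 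This I would extract from the explicit descriptions in Proposition~\ref{eke:prop2.4}: over $W\in{\cal F}$ the fibre of $\Psi\colon\tilde{\cal F}\to{\cal F}$ is $\PP(W\sect F^*W)$, a single point unless $\dim(W\sect F^*W)=2$, i.e.\ unless $W=F^*W$; the locus $\{W=F^*W\}=Gr(2,4)(\FF_p)$ is finite, so $\Psi$ is an isomorphism away from finitely many points, whence $\Sing{\cal F}\subseteq\Psi(\Exc\Psi)$ is finite. Thus $\Omega^1_{\cal F}$ is torsion-free and $S_2$, $\phi$ is an isomorphism, and $\Psi_*\Omega^1_{\tilde{\cal F}}=\Omega^1_{\cal F}$.

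The computational heart of the argument is the verification that $\Sing{\cal F}$ has codimension $\geq3$ --- equivalently, that the positive-dimensional fibres of $\Psi$ sit over a finite set --- since everything else is formal once this and the codimension bound of Proposition~\ref{eke:prop2.4} are available. A secondary point needing care is the left-exactness of the conormal sequence, which rests on ${\cal F}$ being reduced; and if one prefers not to cite the general lemma used above, the same conclusion follows directly by extending a local $1$-form from $U$ to all of ${\cal F}$ using the $S_2$-property of $\Omega^1_{\cal F}$ and then matching it with the given form via torsion-freeness of $\Psi_*\Omega^1_{\tilde{\cal F}}$.
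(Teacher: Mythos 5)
Your argument is correct, but it takes a genuinely different route from the paper's and in fact proves something slightly stronger. The paper's own proof is essentially definitional: it notes that $\Psi$ is a small resolution, identifies $\Psi_*\Omega^1_{\tilde{\cal F}}$ with $j_*\Omega^1_U$ for $U$ the smooth locus, and then \emph{defines} $\Omega^1_{\cal F}$ to be this natural extension across the codimension~$\geq 2$ singular set, so that the asserted equality is built in. You instead take $\Omega^1_{\cal F}$ to be the honest sheaf of K\"ahler differentials and show that the canonical comparison map $\phi\colon\Omega^1_{\cal F}\to\Psi_*\Omega^1_{\tilde{\cal F}}$ is an isomorphism; this forces you to supply the two nontrivial inputs you correctly isolate, namely the $S_2$ property of $\Omega^1_{\cal F}$ (via left-exactness of the conormal sequence of the complete intersection plus the depth lemma) and the finiteness of $\Sing{\cal F}$, which you get from the fibre description $\Psi^{-1}(W)=\PP(W\sect F^*W)$: positive-dimensional fibres occur only over the Frobenius-fixed planes, a finite set, so $\Psi$ is an isomorphism away from finitely many points (this last step tacitly uses Zariski's main theorem and hence normality of ${\cal F}$, but you have already established normality via Serre's criterion from the codimension bound in Proposition~\ref{eke:prop2.4}, so there is no circularity). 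What your version buys is a reconciliation the paper leaves implicit: Corollary~\ref{ekedahl:prop3.5(i)} concerns the Hodge cohomology of the complete intersection ${\cal F}$, i.e.\ K\"ahler differentials, so for the application in Theorem~\ref{main} one really does want to know that the ``natural extension'' $j_*\Omega^1_U$ agrees with $\Omega^1_{\cal F}$ in the K\"ahler sense --- exactly what your $S_2$ computation establishes. The cost is length; the paper's formulation avoids all of this by fiat.
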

\begin{proof}
Let $j : U\hookrightarrow {\cal F}$ be the inclusion of the non-singular locus.
Since $\Psi:\tilde{\cal F}\To {\cal F}$ is a small resolution
and its exceptional set  has codimension $2$,
we have $\Psi_*\Omega_{\tilde{\cal F}}^1 = j_*\Omega_U^i$.
Moreover, since ${\cal F}-U$ has codimension $\geq 2$,
we can define $\Omega_{{\cal F}}^1$ as the natural extension of 
$j_*\Omega_U^i$.  Thus we have 
$\Psi_*\Omega_{\tilde{\cal F}}^1 = \Omega_{\cal F}^1$.
\end{proof}

The cohomologies of smooth complete intersections have been
computed in Proposition~1.3 of \cite{De73}. 
This result has been  extended to the case of singular 
complete intersection by Ekedahl (Proposition~1.2~\cite{Eke04}),
from which we obtain the following.

\begin{Proposition}[Corollary~1.3~\cite{Eke04}]
\label{ekedahl:cor1.3}
Let $Y$ be an $n$-dimensional complete intersection in $\PP^r_K$ 
with only isolated singularities. For the Hodge numbers 
$h^{ij}_Y := \dim H^j(Y, \Omega_Y^i)$ we have that 
$h^{ij} = \delta_{ij}$ when $i+j <n$.
\end{Proposition}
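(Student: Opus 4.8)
\medskip
\noindent\emph{Proof proposal.} The plan is to reduce to Deligne's computation of the Hodge numbers of a \emph{smooth} complete intersection \cite{De73} and to absorb the isolated singularities using the fact that they lie in codimension~$n$. Write $Y$ as the zero scheme of a regular sequence $f_1,\dots,f_c$ of forms of degrees $d_1,\dots,d_c$ on $\PP:=\PP^r_K$, with $c=r-n$, so that $\OO_Y$ is resolved on $\PP$ by the Koszul complex on $(f_1,\dots,f_c)$. When $Y$ is smooth the conormal sequence
\[
0\To\cal N^\vee\To\Omega_\PP^1|_Y\To\Omega_Y^1\To 0,\qquad\cal N^\vee=\Dirsum_k\OO_Y(-d_k),
\]
is short exact with all three terms locally free; taking exterior powers yields, for each $i$, an exact complex that expresses $\Omega_Y^i$ through the sheaves $\Omega_\PP^a|_Y$ twisted by line bundles $\OO_Y(-\sum_{k\in S}d_k)$, $S\subseteq\{1,\dots,c\}$. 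The cohomology of each such $\Omega_\PP^a|_Y(t)$ is then computed from the Koszul resolution of $\OO_Y$ together with Bott's formula for $H^\bullet(\PP^r,\Omega^a_{\PP^r}(t))$, and a careful accounting of degrees shows that below the middle dimension the outcome agrees with that of $\PP$ itself, which gives $h^{ij}_Y=\delta_{ij}$ for $i+j<n$. This is Proposition~1.3 of \cite{De73}.

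Now let $Y$ be singular, put $\Sigma:=\Sing Y$ (of dimension~$0$ by hypothesis) and $U:=Y\setminus\Sigma$. Being a complete intersection, $Y$ is Cohen--Macaulay, and since $\codim_Y\Sigma\geq 2$ it is normal. The sheaves $\cal N^\vee$ and $\Omega_\PP^1|_Y$ are still locally free on all of $Y$, and $Y\hookrightarrow\PP$ is still a regular embedding, so the conormal sequence above remains short exact ($\cal N^\vee$ is locally free, the embedding is lci, and $Y$ is reduced). The only sheaf that now fails to be locally free is $\Omega_Y^1$, and it does so precisely along the $0$-dimensional set $\Sigma$. One may therefore run the same exterior-power construction over $Y$: the groups $H^j(Y,\Omega_\PP^a|_Y(t))$ are computed by Koszul plus Bott exactly as in the smooth case, so the only place a discrepancy with the smooth bookkeeping can enter is in the terms built from the wedge powers of $\Omega_Y^1$, where the construction genuinely detects $\Sigma$; away from that, the complexes relate $\Omega_Y^i$, up to sheaves supported on $\Sigma$, to the restricted bundles $\Omega_\PP^a|_Y(t)$.

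The heart of the matter --- and the step I expect to be the main obstacle --- is to show that this $0$-dimensional discrepancy does not affect $H^j(Y,\Omega_Y^i)$ in the range $i+j<n$. Concretely, one must control the local cohomology of $\Omega_Y^i$ along $\Sigma$, i.e. bound the depth of $\Omega_Y^i$ at an isolated local complete intersection singularity of dimension~$n$, and deduce that the natural comparison of $\Omega_Y^i$ with its reflexive hull $j_*\Omega_U^i$ induces an isomorphism on $H^j$ for every $j$ with $i+j<n$ (the case $j=0$ being automatic from normality, the cases $j\geq 1$ being the delicate ones). This is exactly the content of Ekedahl's Proposition~1.2~\cite{Eke04}. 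I note that the softer degeneration argument --- placing $Y$ in a flat family whose general fibre is a smooth complete intersection of the same multidegree and invoking upper semicontinuity --- only yields the inequality $\delta_{ij}\leq h^{ij}_Y$, and even that requires care since $\Omega^i$ does not commute with base change at $Y$; so the finer local analysis cannot be avoided. Granting Ekedahl's estimate, the degree bookkeeping of the smooth case carries over unchanged and gives $h^{ij}_Y=\delta_{ij}$ for $i+j<n$; in particular this applies to the complete intersection $\cal F\subset\PP^5_K$ of type $(2,p+1)$ with its isolated singular points.
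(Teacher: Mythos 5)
Your proposal matches the paper's treatment: the paper offers no independent proof of this statement, but simply quotes it from Ekedahl, noting that the smooth case is Deligne's Proposition~1.3 of \cite{De73} and that the extension to isolated singularities is Ekedahl's Proposition~1.2 of \cite{Eke04} --- exactly the two ingredients, and the same division of labour, that your sketch identifies (with the hard local-cohomology step correctly isolated and deferred to Ekedahl). This is essentially the same approach, presented with more detail than the paper itself gives.
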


\begin{Corollary}
\label{ekedahl:prop3.5(i)}
$H^0({\cal F}, \Omega_{\cal F}^1)=0$.
\end{Corollary}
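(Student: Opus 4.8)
The plan is to combine Proposition~\ref{ekedahl:thm3.6(i)} with Proposition~\ref{ekedahl:cor1.3}. First I would observe that $\mathcal{F}$ is a complete intersection of type $(2,4)$ in $\PP^5_K$ (this was established just before Theorem~\ref{ekedahl:interpretation}, specializing $p=3$), and by Proposition~\ref{eke:prop2.4} it has only isolated singularities, since $\Psi:\tilde{\mathcal{F}}\to\mathcal{F}$ is a small resolution whose exceptional set has codimension~$2$ in the $3$-fold $\tilde{\mathcal{F}}\iso X$, hence the singular locus of $\mathcal{F}$ is $0$-dimensional. Thus Proposition~\ref{ekedahl:cor1.3} applies with $n=3$: for the Hodge numbers $h^{ij}_{\mathcal{F}}$ we have $h^{ij}_{\mathcal{F}}=\delta_{ij}$ whenever $i+j<3$. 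In particular, taking $i=1$, $j=0$, we get $h^{10}_{\mathcal{F}} = \dim H^0(\mathcal{F},\Omega^1_{\mathcal{F}}) = \delta_{10} = 0$.

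One subtlety to address is that Proposition~\ref{ekedahl:cor1.3} as stated concerns the Hodge numbers of the (possibly singular) complete intersection $Y$ itself, with $\Omega^i_Y$ interpreted appropriately; I would confirm that the sheaf $\Omega^1_{\mathcal{F}}$ used here is precisely the one appearing in that statement, namely the reflexive extension $j_*\Omega^1_U$ from the nonsingular locus $j:U\hookrightarrow\mathcal{F}$, which is exactly the sheaf constructed in the proof of Proposition~\ref{ekedahl:thm3.6(i)}. Since these coincide, no further identification is needed, and the conclusion $H^0(\mathcal{F},\Omega^1_{\mathcal{F}})=0$ follows immediately.

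The only real point requiring care—hardly an obstacle—is the verification that $\mathcal{F}$ genuinely has isolated singularities rather than merely singularities in codimension~$\geq 2$; but this is forced by dimension count, since $\dim\mathcal{F}=3$ and the exceptional locus of the small resolution has codimension~$2$, so the non-isomorphism locus of $\Psi$ in $\mathcal{F}$—which contains $\operatorname{Sing}\mathcal{F}$—is at most $1$-dimensional, and in fact the Deligne--Lusztig description pins it down to be the $40$ points of $\PP^5_{\FF_3}$ appearing in Theorem~\ref{ekedahl:interpretation}. Hence the hypotheses of Proposition~\ref{ekedahl:cor1.3} are met and the statement follows.
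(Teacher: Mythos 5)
Your proposal is correct and matches the paper's (implicit) argument: the corollary is stated without proof as an immediate consequence of Proposition~\ref{ekedahl:cor1.3} applied to the $3$-dimensional complete intersection ${\cal F}$ of type $(2,4)$ in $\PP^5_K$ with only isolated singularities, giving $h^{10}_{\cal F}=\delta_{10}=0$. Your extra care in identifying $\Omega^1_{\cal F}$ with the reflexive extension $j_*\Omega^1_U$ and in checking that the singular locus really is the $40$ points of $\PP^5_{\FF_3}$ only makes explicit what the paper leaves tacit.
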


%%%%%%%%%%%%%%%%%%%%%%%%%%%%%%%%%%%%%%%%%%%%%%%%%%
\subsection{Tango structure and Kodaira vanishing}
%%%%%%%%%%%%%%%%%%%%%%%%%%%%%%%%%%%%%%%%%%%%%%%%%%

Let $X$ be a smooth projective variety. Then an ample divisor $D$, or
an ample line bundle $L= \OO_X(D)$, is called a pre-Tango structure
if there exists an element $\eta\in K(X)\backslash K(X)^p$
such that the K\"ahler
differential is $d\eta\in \Omega_X^1(-pD)$, which will be simply
denoted as $(dy)\geq pD$.

If there exists a pre-Tango structure $L=\OO_X(D)$, 
we have $H^1(X,L^{-1})\ne{0}$. In fact, consider 
the absolute Frobenius map $F:\OO_X(-D)\to \OO_X(-pD)$ and 
set $B_X(-D) :=\Coker F$. Then we have the exact sequence
\begin{equation*}
0\To H^0(X, B_X(-D)) \To H^1(X, \OO_X(-D)) \overset{F}{\To} H^1(X, \OO_X(-pD))
\end{equation*}
where we have $H^0(X, B_X(-D)) = \{df \in K(X)\;\vert\; (df)\geq pD\}$
and this is non-trivial since $D$ is  pre-Tango.
Notice that multiplication by a non-trivial element from $H^0(X, B_X(-D))$ 
gives an embedding $\OO_X(pD) \hookrightarrow \Omega_X^1$.
See, for example, \cite{Takeda, Tan, Ray, Mu11} for more detail information 
on the pre-Tango structure.

The inclusion $H^0(X, B_X(-D))\subset H^1(X, \OO_X(-D))$ may be strict, which 
means that, for an ample line bundle $L$, $H^1(X, L^{-1})\ne{0}$ does not 
always mean that $L$ is a pre-Tango structure. However, we have

\begin{Proposition}
\label{knvTango}
If $H^1(X, L^{-1})\ne{0}$ for an ample line bundle $L$, then 
$L^n$ is a pre-Tango structure for some integer $n\geq 1$.
\end{Proposition}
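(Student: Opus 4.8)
The plan is to use the Frobenius-based exact sequence already displayed above and chase cohomology along iterated Frobenius pullbacks. Write $D$ for a divisor with $L=\OO_X(D)$ and let $B_X^{(1)}(-D)=\Coker(F\:\OO_X(-D)\to\OO_X(-pD))$; more generally, iterating the absolute Frobenius gives $F^m\:\OO_X(-D)\to\OO_X(-p^mD)$ with cokernel $B_X^{(m)}(-D)$, fitting into
\begin{equation*}
0\To H^0(X,B_X^{(m)}(-D))\To H^1(X,\OO_X(-D))\overset{F^m}{\To} H^1(X,\OO_X(-p^mD)).
\end{equation*}
Since $L$ is ample, Serre vanishing yields $H^1(X,\OO_X(-p^mD))=0$ for $m\gg 0$, so for such $m$ the group $H^0(X,B_X^{(m)}(-D))$ is \emph{all} of $H^1(X,\OO_X(-D))$, which is nonzero by hypothesis. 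Hence we may pick a nonzero section $s\in H^0(X,B_X^{(m)}(-D))$.

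The next step is to identify such a section with Kähler-differential data. As in the discussion of pre-Tango structures above, $H^0(X,B_X^{(1)}(-D))$ consists of the classes $df$ with $(df)\geq pD$; for the iterated version one gets, by the same local computation (unwinding the filtration of $B_X^{(m)}$ by the Cartier operator, or simply by composing the $m$ one-step sequences), that a nonzero $s$ produces an element $\eta\in K(X)$ with $\eta\notin K(X)^p$ and $d\eta\in\Omega_X^1(-p^m D)$ for the relevant $m$; equivalently $(d\eta)\geq p^m D$. I would present this as a short induction on $m$: from $B_X^{(m)}=B_X^{(m-1)}\oplus F_*^{m-1}B_X^{(1)}$-type decompositions (or the exact sequence $0\to F^{m-1}_*B^{(1)}_X\to B^{(m)}_X\to B^{(m-1)}_X\to 0$ twisted appropriately), a section either already lives on a lower level — handled by induction — or its image in the bottom $B_X^{(1)}$ is a nonzero $df$ with $(df)\geq p^mD\geq pD$, giving $\eta=f$ directly.

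Finally, set $n=p^m$ and $L^n=\OO_X(nD)=\OO_X(p^mD)$. The element $\eta$ satisfies $\eta\notin K(X)^p$ and $(d\eta)\geq p^mD = p\cdot(p^{m-1}D)$, so by definition $\OO_X(p^{m-1}D)$ is a pre-Tango structure; to land exactly on a power of $L$ I instead keep $D'=p^{m-1}D$, note $(d\eta)\geq pD'$, and observe $\OO_X(D')=L^{p^{m-1}}$, a power of $L$ with positive exponent since $L$ is ample. (If one prefers the statement literally as ``$L^n$ is pre-Tango,'' take $n=p^{m-1}$.) The main obstacle is the bookkeeping in the second step: one must make sure the section of the iterated $B_X^{(m)}$ really yields a differential $d\eta$ with $\eta\notin K(X)^p$ — i.e. that $\eta$ is genuinely ``new'' at some Frobenius level and not a $p$-th power that would force $d\eta=0$ — and that the pole order $\geq p\,D'$ is preserved through the induction; everything else (Serre vanishing, exactness of the Frobenius sequences) is standard.
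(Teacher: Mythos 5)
Your argument is correct in substance but follows a genuinely different, and more laborious, route than the paper. The paper never introduces the iterated cokernels $B_X^{(m)}$: it observes (via Enriques--Severi--Zariski; your appeal to Serre vanishing works too, via Serre duality on the smooth projective $X$) that $H^1(X,L^{-p^e})=0$ for $e\gg 0$, picks the \emph{largest} $e$ with $H^1(X,L^{-p^e})\ne 0$, and applies the one-step Frobenius sequence to $D'=p^eD$: since the target $H^1(X,\OO_X(-pD'))$ is already zero, $H^0(X,B_X(-D'))=H^1(X,L^{-p^e})\ne 0$, so $L^{p^e}$ is pre-Tango. This choice of starting level replaces your entire second step, which is exactly the bookkeeping you flag as the main obstacle. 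That said, your descent does go through: the correct short exact sequence is $0\To B_X^{(m-1)}(-D)\To B_X^{(m)}(-D)\To F^{m-1}_*B_X^{(1)}(-p^{m-1}D)\To 0$ (you have the sub and quotient interchanged, and the direct-sum decomposition does not hold in general); a nonzero section of $B_X^{(m)}(-D)$ either maps to a nonzero $df$ with $(df)\geq p^mD$ in the quotient, or lies in $B_X^{(m-1)}(-D)$ and one recurses, so at some level $j$ one obtains a nonzero $df$ with $(df)\geq p^{j+1}D$ and $L^{p^j}$ is pre-Tango. Your residual worry about $\eta\notin K(X)^p$ is automatic: a class in $H^0(B_X^{(1)}(-p^jD))$ is an exact differential $df$, and it is nonzero precisely when $f\notin K(X)^p$. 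Both proofs land on the same conclusion with $n$ a power of $p$; the paper's version simply avoids any structure theory of $F^m_*\OO_X/\OO_X$.
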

\begin{proof}
Since $X$ is normal, Enriques-Severi-Zariski's theorem shows that 
iterated Frobenius maps
\begin{equation*}
       F^e : H^1(X, L^{-1}) \To H^1(X, L^{-p^e})\qquad (e\gg 0)
\end{equation*}
are trivial. Thus we have $H^0(X, B_X(-nD)) = H^1(X, L^{-n})(\ne{0})$ 
for a sufficiently large $n\in \NN$, or  precisely for  $n=p^e$
such that  $H^1(X, L^{-p^e})\ne{0}$ but $H^1(X, L^{-p^{e+1}})=0$. 
\end{proof}

%%%%%%%%%%%%%%%%%%%%%%
%%%%%%%%%%%%%%%%%%%%%%
\section{Main theorem}
%%%%%%%%%%%%%%%%%%%%%%
%%%%%%%%%%%%%%%%%%%%%%

Now we can prove the main result.

\begin{Theorem}
\label{main}
Let $X$ be  the Hirokado variety and $L$ be an ample line bundle
with $H^0(X, L^3)\ne{0}$. 
Then we have $H^1(X, L^{-1})=0$.
\end{Theorem}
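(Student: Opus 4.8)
The plan is to argue by contradiction using the machinery of pre-Tango structures together with Ekedahl's interpretation of $X$ as (a blow-up of) the complete intersection $\cal{F}\subset\PP^5_K$. Suppose $H^1(X,L^{-1})\neq 0$ for some ample $L$ with $H^0(X,L^3)\neq 0$. By Proposition~\ref{knvTango}, $L^n = \OO_X(nD)$ is a pre-Tango structure for some $n\geq 1$, so there is a non-trivial global section of $B_X(-nD)$, and in particular an embedding $\OO_X(nD)\hookrightarrow \Omega_X^1$, i.e.\ a non-zero map giving $H^0(X,\Omega_X^1(-nD))\neq 0$. Since $L$ is ample and $D$ is effective-related to the pre-Tango datum, twisting down by the effective divisor coming from a section of $L^3$ (here the hypothesis $H^0(X,L^3)\neq 0$ enters) will let me produce a non-zero element of $H^0(X,\Omega_X^1(-mD))$ for $m$ as small as $1$, or at any rate a non-zero class in $H^0(X,\Omega_X^1)$ after composing with the inclusion $\Omega_X^1(-mD)\hookrightarrow \Omega_X^1$ valid because $mD$ is (linearly equivalent to) an effective divisor once we have a section of $L^3$ and $m\le n\cdot(\text{something})$; the point is that ampleness plus $H^0(L^3)\neq 0$ forces $L$ itself, or a controlled power, to be effective, so $\Omega_X^1(-D)$ already has a section. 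This would give $H^0(X,\Omega_X^1)\neq 0$.

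Next I would transport this non-vanishing to $\cal{F}$. Recall from Theorem~\ref{ekedahl:interpretation} that $X$ is the blow-up $\Psi\colon \tilde{\cal F}\iso X \to \cal{F}$ at $40$ points, a small resolution whose exceptional locus has codimension $2$ (Proposition~\ref{eke:prop2.4}). By Proposition~\ref{ekedahl:thm3.6(i)}, $\Psi_*\Omega_{\tilde{\cal F}}^1 = \Omega_{\cal F}^1$, hence $H^0(X,\Omega_X^1) = H^0(\tilde{\cal F},\Omega_{\tilde{\cal F}}^1) = H^0(\cal{F},\Omega_{\cal F}^1)$. But Corollary~\ref{ekedahl:prop3.5(i)} asserts $H^0(\cal{F},\Omega_{\cal F}^1) = 0$. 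This contradiction completes the proof. The role of $H^0(X,L^3)\neq 0$ is precisely to bridge the gap between "$L^n$ is pre-Tango for some possibly large $n$" and "$\Omega_X^1$ has a global section after only a bounded twist": without it, a pre-Tango structure $L^n$ only yields $H^0(\Omega_X^1(-nD))\neq 0$, which need not force $H^0(\Omega_X^1)\neq 0$.

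The main obstacle is the bookkeeping in the first paragraph: turning the pre-Tango embedding $\OO_X(nD)\hookrightarrow\Omega_X^1$ (with $n$ a priori large, namely $n=p^e$) into an honest non-zero global section of $\Omega_X^1$ itself. The clean way to do this is to use the hypothesis $H^0(X,L^3)\neq 0$ to exhibit an effective divisor $D'\sim 3D$, whence for any $k\geq 0$ there is an inclusion $\Omega_X^1(-nD)\hookrightarrow \Omega_X^1\bigl(-(n-3k)D\bigr)$ as long as $n-3k\geq 0$ is not forced — more simply, $\Omega_X^1(-nD)$ injects into $\Omega_X^1$ whenever $nD$ is linearly equivalent to an effective divisor, which holds once some positive multiple of $L$ has a section, and $H^0(L^3)\neq 0$ supplies exactly that (together with ampleness, which makes $nD\sim$ effective for the relevant $n=p^e$ since $p^e$ is a multiple-friendly exponent: write $p^e = 3q + s$ and use ampleness to get $H^0(L^s\otimes(\text{ample}))$, or just note $L^{p^e}$ is ample hence big hence effective on the Calabi--Yau threefold). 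Either way one lands in $H^0(X,\Omega_X^1)$, and the geometric input of Ekedahl finishes the argument; I expect the write-up to spend most of its length making this twisting-down step precise.
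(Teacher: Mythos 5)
Your overall strategy is exactly the paper's: assume $H^1(X,L^{-1})\neq 0$, invoke Proposition~\ref{knvTango} to obtain a pre-Tango structure ${\cal L}=L^{n}$, use the resulting embedding into $\Omega_X^1$ to manufacture a nonzero global $1$-form, and contradict $H^0({\cal F},\Omega_{\cal F}^1)=0$ via $\Psi_*\Omega^1_{\tilde{\cal F}}=\Omega^1_{\cal F}$. The geometric half of your argument (transporting the section through $\Psi$) is correct and is what the paper does.

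The gap is in the middle step, and it comes from dropping a factor of $p$. A pre-Tango divisor $D'=nD$ comes with $\eta$ satisfying $(d\eta)\geq pD'$, i.e.\ $d\eta$ is a nonzero global section of $\Omega_X^1(-pD')$, which is an embedding $\OO_X(pnD)\hookrightarrow\Omega_X^1$ --- not $\OO_X(nD)\hookrightarrow\Omega_X^1$ as you write. This omission is what forces you into the ``twisting-down'' contortions, none of which are sound: $n=p^{e}$ can equal $1$, in which case $H^0(X,L^{n})\neq 0$ does not follow from $H^0(X,L^{3})\neq 0$; and ``ample hence big hence effective'' is false in general and circular here, since on a Calabi--Yau threefold in characteristic $p$ the usual deduction of effectivity of an ample bundle from Riemann--Roch uses precisely the Kodaira-type vanishing under discussion --- this is why the hypothesis $H^0(X,L^{3})\neq 0$ appears in the statement at all. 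With the correct embedding the relevant exponent is $np=3n$, automatically a multiple of $3$, so the $n$-th power of a nonzero section of $L^{3}$ is a nonzero section of $L^{3n}={\cal L}^{p}$, whose image under ${\cal L}^{p}\hookrightarrow\Omega_X^1$ lands in $H^0(X,\Omega_X^1)=H^0({\cal F},\Omega^1_{\cal F})=0$, an immediate contradiction. No division with remainder, and no effectivity of $L$ itself, is needed.
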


\begin{proof}
Assume that $H^1(X, L^{-1})\ne{0}$ for some ample line bundle
$L$. Then by Proposition~\ref{knvTango} there exists an integer
$n\geq 1$ such that ${\cal L} = L^n$ is a pre-Tango structure.
Thus we have  ${\cal L}^p \subset \Omega_X^1$, $p=3$

On the other hand, by Proposition~\ref{eke:prop2.4} and
Theorem~\ref{ekedahl:interpretation},
we have $X \iso \tilde{\cal F}$ together with a desingularization
$\Psi : \tilde{\cal F}\to {\cal F}$
of a $(2,4)$-type complete intersection ${\cal F}$ in $\PP^5_K$.
Then we compute 
\begin{equation*}
H^0(\tilde{\cal F}, \Omega_{\tilde{\cal F}}^1)
= H^0({\cal F}, \Psi_*\Omega_{\tilde{\cal F}}^1) 
= H^0({\cal F}, \Omega_{\cal F}^1)
= 0 
\end{equation*}
by 
Proposition~\ref{ekedahl:thm3.6(i)}
and Corollary~\ref{ekedahl:prop3.5(i)}.
Since $H^0(\tilde{\cal F}, {\cal L}^p) \subset H^0(\tilde{\cal F},
\Omega_{\tilde{\cal F}}^1)$, we then have $H^0(X, L^{np}) 
= H^0(\tilde{\cal F}, {\cal L}^p)=0$,
which contradicts the assumption $H^0(X, L^p)\ne{0}$, $p=3$.
\end{proof}

%%%%%%%%%%%%%%%%%%%%%%%%%%%%%%%%%%%%%%%%%%%%%%%

%%%%%%%%%%%%%%%%%%%%%%%%%%%%%%%%%%%%%%%%%%%%%%

\end{document}